\documentclass[12pt]{article}
\usepackage{amsmath,amsthm,amsfonts,latexsym,amsopn,verbatim,amscd,amssymb}
\theoremstyle{plain}
\newtheorem{theorem}{Theorem}[section]
\newtheorem{lemma}[theorem]{Lemma}

\newcommand{\bnum}{\begin{enumerate}}
\newcommand{\enum}{\end{enumerate}}

\numberwithin{equation}{section}

\begin{document}

\title{\textbf{Some bounds for commuting probability of finite rings}}
\author{Jutirekha Dutta, Dhiren Kumar Basnet\footnote{Corresponding author}}
\date{}
\maketitle
\begin{center}\small{\it 
Department of Mathematical Sciences, Tezpur University,\\ Napaam-784028, Sonitpur, Assam, India.\\



Emails:\, jutirekhadutta@yahoo.com and  dbasnet@tezu.ernet.in}
\end{center}

\medskip

\begin{abstract} 
Let $R$ be a finite ring. The commuting probability of  $R$ is the probability that any two randomly chosen elements of $R$ commute. In this paper, we obtain some bounds for commuting probability of $R$.
\end{abstract}

\medskip

\noindent {\small{\textit{Key words:}  finite ring, commuting probability.}}  
 
\noindent {\small{\textit{2010 Mathematics Subject Classification:} 
16U70, 16U80.}} 

\medskip

\section{Introduction}
Throughout the paper $R$ denotes a finite ring. The commuting probability of $R$, denoted by $\Pr(R)$, is the probability that a randomly chosen pair of elements of $R$ commute. That is 
\begin{equation}\label{com_prob_eq1}
\Pr(R) = \frac{|\{(s, r) \in R \times R : sr = rs\}|}{|R \times R|}.
\end{equation}
The study of $\Pr(R)$ was initiated  by MacHale \cite{dmachale} in the year 1976. After the  works of Erd$\ddot{\rm o}$s and Tur$\acute{\rm a}$n \cite{pEpT68}, many papers have been written  on commuting probability of finite groups in the last few decades, for example see \cite{DN11} and the references therein.
 However, people did not work much on commuting probability of finite rings. We have only few papers \cite{BM, BMS, dmachale} on $\Pr(R)$ in the literature. 
In this paper, we obtain some bounds for $\Pr(R)$. 


Recall that $[s, r]$ to denote the additive commutator $sr - rs$ for any two elements $s, r \in R$.  By $K(R, R)$ we denote the set $\{[s, r] : s, r \in R\}$ and $[R, R]$ denotes the subgroup of $(R, +)$ generated by $K(R, R)$. Note that $[R, R]$ is the commutator subgroup of $(R, +)$ (see \cite{BMS}). Also, for any $x \in R$, we write $[x, R]$ to denote the subgroup of $(R, +)$ consisting of all elements of the form $[x, y]$ where $y \in R$.

\section{Main Results}

Let $C_R(r)$ denote the subset  $\{s \in R : sr = rs\}$ of $R$, where $r$ is  an element of $R$. Then $C_R(r)$ is a subring of $R$  known as centralizer of $r$ in $R$. Note that the center $Z(R)$ of $R$ is the intersection of all the centralizers in $R$. 

By \eqref{com_prob_eq1}, we have
\[
\Pr(R) = \frac{1}{|R|^2} \underset{r \in R}{\sum}|C_R(r)|
\]
and hence
\begin{equation}\label{com_prob_eq2}
\Pr(R) = \frac{|Z(R)|}{|R|}  + \frac{1}{|R|^2} \underset{r \in R \setminus Z(R)}{\sum}|C_R(r)|.
\end{equation}

If $p$ is the smallest prime dividing the order of a finite non-commutative ring $R$ then, by  \cite[Theorem 2]{dmachale}, we have
\begin{equation}\label{com_prob_eq3}
\Pr(R) \leq \frac{p^2 + p - 1}{p^3}.
\end{equation}
In the following theorem, we  give two bounds for $\Pr(R)$. We shall see that  the upper bound for $\Pr(R)$ in Theorem \ref{theorem001} is better than \eqref{com_prob_eq3}. 
\begin{theorem}\label{theorem001}
Let $R$ be a finite non-commutative ring. If $p$ is the smallest prime dividing $|R|$ then 
\begin{enumerate}
\item[\rm (a)] $\Pr(R) \geq \frac{|Z(R)|}{|R|} + \frac{p(|R| - |Z(R)|)}{|R|^2}$ with equality if and only if $|C_R(r)| = p$ for all $r \notin Z(R)$.
\item[\rm (b)] $\Pr(R) \leq \frac{(p - 1)|Z(R)| + |R|}{p|R|}$ 
with equality if and only if $|R: C_R(r)| = p$ for all $r \notin Z(R)$.
\end{enumerate}
\end{theorem}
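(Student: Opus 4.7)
The plan is to start from the identity (2.2) already derived in the paper,
\[
\Pr(R) = \frac{|Z(R)|}{|R|} + \frac{1}{|R|^2} \sum_{r \in R \setminus Z(R)} |C_R(r)|,
\]
and to estimate the sum on the right by bounding $|C_R(r)|$ for each $r \notin Z(R)$ using Lagrange's theorem applied to the additive group $(R,+)$.

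For part (a), observe that $C_R(r)$ is a subring, hence an additive subgroup of $(R,+)$, so $|C_R(r)|$ divides $|R|$. Moreover, for $r \notin Z(R)$ we have $r \neq 0$ together with $0, r \in C_R(r)$, so $|C_R(r)| \geq 2$. Since $p$ is the smallest prime divisor of $|R|$, any divisor of $|R|$ exceeding $1$ is at least $p$, so $|C_R(r)| \geq p$. Summing this inequality over the $|R| - |Z(R)|$ elements of $R \setminus Z(R)$ and plugging back into (2.2) gives the stated lower bound, and equality clearly holds exactly when $|C_R(r)| = p$ for every $r \notin Z(R)$.

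For part (b), when $r \notin Z(R)$ the centralizer $C_R(r)$ is a \emph{proper} additive subgroup of $(R,+)$, so its index $|R : C_R(r)|$ is a divisor of $|R|$ strictly greater than $1$, hence at least $p$. Therefore $|C_R(r)| \leq |R|/p$. Summing and simplifying,
\[
\Pr(R) \leq \frac{|Z(R)|}{|R|} + \frac{1}{|R|^2} \cdot \frac{|R|}{p}(|R| - |Z(R)|) = \frac{(p-1)|Z(R)| + |R|}{p|R|},
\]
with equality iff $|R : C_R(r)| = p$ for every $r \notin Z(R)$.

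I do not expect a serious obstacle here: the argument is essentially bookkeeping once one invokes Lagrange's theorem on $(R,+)$. The only subtle point is the small observation in part (a) that $r$ itself is a nonzero element of $C_R(r)$, which is what rules out $|C_R(r)| = 1$ and thereby promotes the trivial divisor bound to $|C_R(r)| \geq p$. If one were to address the claim (made just before the theorem) that the new upper bound improves on \eqref{com_prob_eq3}, it would suffice to combine (b) with the observation that $|Z(R)| \leq |R|/p^2$ for a non-commutative $R$, but this comparison is separate from the statement itself.
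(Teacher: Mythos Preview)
Your proposal is correct and follows essentially the same approach as the paper: both start from identity (2.2), bound $|C_R(r)|$ below by $p$ and above by $|R|/p$ for $r \notin Z(R)$, sum, and read off the equality conditions. Your version merely spells out the Lagrange-theorem justification for these bounds (including the observation that $0,r \in C_R(r)$ forces $|C_R(r)| > 1$), which the paper leaves implicit.
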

\begin{proof}
By \eqref{com_prob_eq2}, we have 
\begin{equation}\label{boundeq-3}
|R|^2 \Pr(R) =  |R||Z(R)| + \underset{r \in R\setminus Z(R)}{\sum}|C_R(r)|.
\end{equation}
(a) If $r \notin Z(R)$ then $|C_R(r)| \geq p$. Therefore
\[
\underset{r \in R\setminus Z(R)}{\sum}|C_R(r)| \geq p(|R| - |Z(R)|) 
\]
with equality if and only if $|C_R(r)| = p$ for all $r \notin Z(R)$. Hence, the result follows from \eqref{boundeq-3}.

(b) If $r \notin Z(R)$ then $|C_R(r)| \leq \frac{|R|}{p}$. Therefore
\[
\underset{r \in R\setminus Z(R)}{\sum}|C_R(r)| \leq \frac{|R|(|R| - |Z(R)|)}{p}
\]
with equality if and only if $|R: C_R(r)| = p$ for all $r \notin Z(R)$. Hence, the result follows from \eqref{boundeq-3}.
\end{proof}

\noindent If $R$ is a non-commutative ring and $p$  the smallest prime dividing $|R|$ then $|R : Z(R)| \geq p^2$. Therefore
\[
\frac{(p - 1)|Z(R)| + |R|}{p|R|} \leq \frac{p^2 + p - 1}{p^3}.
\]
Thus the  bound obtained in Theorem  \ref{theorem001}(b) is better than \eqref{com_prob_eq3}.


If $S$ is a subring of $R$ then  MacHale \cite[Theorem 4]{dmachale} showed that
\begin{equation}\label{MachaleThm4}
\Pr(R) \leq \Pr(S).
\end{equation} 
We now proceed to derive an improvement of \eqref{MachaleThm4}. We require the following lemma. 
\begin{lemma}\label{lemma2}
Let  $N$ be an ideal of a finite non-commutative ring $R$. Then 
\[
\frac{C_R(x) + N}{N} \subseteq C_{R/N}(x + N) \; \text{for all} \; x \in R.
\]
The equality holds if $N \cap [R, R] = \{0\}$.
\end{lemma}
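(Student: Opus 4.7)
The plan is to establish the containment by a direct coset computation, and then use the hypothesis $N \cap [R,R] = \{0\}$ to obtain the reverse inclusion by lifting commuting relations in $R/N$ back up to $R$.

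For the forward inclusion, I would take an arbitrary element of $(C_R(x)+N)/N$ and write it as $(c+n)+N$ with $c \in C_R(x)$ and $n \in N$. Checking that $(c+n)+N$ commutes with $x+N$ in $R/N$ amounts to showing that the additive commutator $[c+n,x]$ lies in $N$. A direct calculation gives $[c+n,x] = [c,x] + [n,x] = nx - xn$, since $c$ already commutes with $x$. As $N$ is a two-sided ideal of $R$, both $nx$ and $xn$ lie in $N$, so $[c+n,x] \in N$ as required.

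For the equality case, assuming $N \cap [R,R] = \{0\}$, I would take any $y+N \in C_{R/N}(x+N)$, so that $yx - xy \in N$. Since $[y,x] = yx - xy$ belongs to $K(R,R) \subseteq [R,R]$ by definition, it follows that $[y,x] \in N \cap [R,R] = \{0\}$, which forces $yx = xy$, i.e., $y \in C_R(x)$. In particular $y+N \in (C_R(x)+N)/N$, and the reverse inclusion follows.

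I do not anticipate any serious obstacle: the verification is short once one observes that the hypothesis $N \cap [R,R] = \{0\}$ is tailor-made to promote commutativity modulo $N$ to commutativity in $R$. The only point requiring mild care is keeping the additive coset notation straight while computing $[c+n,x]$ and making clear where the ideal property of $N$ is used versus where the intersection hypothesis is used.
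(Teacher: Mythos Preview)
Your proposal is correct and follows essentially the same direct verification as the paper; the only cosmetic difference is that for the forward inclusion the paper first reduces the coset $(c+n)+N$ to $c+N$ and checks commutativity with $x+N$ using $c \in C_R(x)$, whereas you compute $[c+n,x] = [c,x] + [n,x]$ and invoke the ideal property of $N$. The equality case is identical to the paper's argument.
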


\begin{proof}
For any element $s \in C_R(x) + N$, where $s = r + n$ for some $r \in C_R(x)$ and $n \in N$, we have $s + N = r + N \in R/N$. Also, 
\[
(s + N)(x + N) = rx + N = xr + N = (x + N)(s + N), 
\]
as $r \in C_R(x)$. This proves the first part.

Let $N \cap [R, R] = \{0\}$ and $y + N \in C_{R/N}(x + N)$. Then $y \in R$ and  $(y + N)(x + N) = (x + N)(y + N)$. This gives $yx - xy \in N \cap [R, R] = \{0\}$ and so $y \in C_R(x)$. Therefore, $y + N \in \frac{C_R(x) + N}{N}$. Hence the equality holds. 
\end{proof}
The following result which is an improvement of \eqref{MachaleThm4} also gives a relation between $\Pr(R), \Pr(R/N)$ and $\Pr(N)$, where $N$ is an ideal of $R$.
\begin{theorem}\label{theorem3}
Let  $N$ be an ideal of a finite non-commutative finite ring $R$. Then 
\[
\Pr(R) \leq \Pr (R/N) \Pr(N).
\]
The equality holds if  $N \cap [R, R] = \{0\}$.
\end{theorem}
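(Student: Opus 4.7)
My plan is to adapt Gallagher's classical group-theoretic inequality $\Pr(G) \le \Pr(G/N)\Pr(N)$ to the ring setting, using Lemma \ref{lemma2} as the ring analogue of the standard centralizer containment used in the group case. The starting point is the identity $|R|^2\Pr(R) = \sum_{x \in R}|C_R(x)|$ already used to derive \eqref{com_prob_eq2}; the task reduces to proving $\sum_{x \in R}|C_R(x)| \le |R|^2\Pr(R/N)\Pr(N)$.

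For each $x \in R$, Lemma \ref{lemma2} provides the containment $(C_R(x)+N)/N \subseteq C_{R/N}(x+N)$, and the additive second isomorphism theorem, together with the equality $C_R(x) \cap N = C_N(x)$, yields the pointwise bound $|C_R(x)| \le |C_N(x)|\,|C_{R/N}(x+N)|$. Summing over $x \in R$ and grouping terms by cosets of $N$ reduces everything to showing, for each coset $x_0 + N$, that $\sum_{x \in x_0 + N}|C_N(x)| \le |N|^2\Pr(N)$; combining this with $\sum_{\bar x \in R/N}|C_{R/N}(\bar x)| = |R/N|^2\Pr(R/N)$ and cancelling $|N|^2$ against $|R/N|^2 = |R|^2/|N|^2$ then gives the desired inequality.

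The only non-routine step is this inner coset estimate. For fixed $m \in N$ and $x = x_0 + n$ with $n \in N$, a short expansion gives $xm - mx = [x_0, m] + [n, m]$, so $xm = mx$ is equivalent to $[m, n] = [x_0, m]$ in the additive group $R$. Since $n \mapsto [m, n]$ is an additive homomorphism $(N, +) \to (R, +)$ with kernel $C_N(m)$, its fibre over any element is either empty or a coset of $C_N(m)$, hence has cardinality at most $|C_N(m)|$. Swapping the order of summation then yields $\sum_{x \in x_0 + N}|C_N(x)| = \sum_{m \in N}|\{n \in N : [m,n] = [x_0, m]\}| \le \sum_{m \in N}|C_N(m)| = |N|^2\Pr(N)$, as required.

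For the equality claim, suppose $N \cap [R, R] = \{0\}$. Since $N$ is a two-sided ideal, $[R, m] \subseteq N$ for every $m \in N$, and trivially $[R, m] \subseteq [R, R]$; hence $[R, m] \subseteq N \cap [R, R] = \{0\}$, forcing $N \subseteq Z(R)$. Consequently $[x_0, m] = 0$ for all $x_0 \in R$ and $m \in N$, so $\{n \in N : [m, n] = [x_0, m]\}$ is all of $C_N(m) = N$, giving equality in the fibre bound. The equality clause in Lemma \ref{lemma2} also holds under this hypothesis, so every inequality in the chain becomes an equality and $\Pr(R) = \Pr(R/N)\Pr(N)$.
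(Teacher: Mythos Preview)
Your proof is correct and follows essentially the same route as the paper: both start from $|R|^2\Pr(R)=\sum_x|C_R(x)|$, use Lemma~\ref{lemma2} together with the additive second isomorphism theorem to bound $|C_R(x)|\le |C_N(x)|\,|C_{R/N}(x+N)|$, group by cosets, and reduce to the inner estimate $\sum_{x\in x_0+N}|C_N(x)|\le |N|^2\Pr(N)$. The paper proves this last step by writing the left side as $\sum_{n\in N}|C_R(n)\cap S|$ and showing that each nonempty $C_R(n)\cap S$ is a single coset of $C_N(n)$; your fibre-of-homomorphism argument (the map $n\mapsto[m,n]$ has kernel $C_N(m)$) is the same fact in different clothing.

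One small difference worth noting: for the equality clause you observe that $N\cap[R,R]=\{0\}$ forces $N\subseteq Z(R)$ (since $[R,m]\subseteq N\cap[R,R]$ for $m\in N$), which immediately gives $\Pr(N)=1$ and makes every fibre equal to $N$. The paper instead verifies directly that $C_R(n)\cap S\ne\emptyset$ for every $n$ and $S$. Your formulation is a mild simplification, but the underlying content is identical.
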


\begin{proof}
We have that 
\allowdisplaybreaks{
\begin{align*}
|R|^2 \Pr(R) = &\underset{x \in R}{\sum} |C_R(x)| \\
= &\underset{S \in \frac{R}{N}}{\sum}\underset{y \in S}{\sum}\frac{|C_R(y)|}{|N \cap C_R(y)|} |C_N(y)| \\
= &\underset{S \in \frac{R}{N}}{\sum}\underset{y \in S}{\sum}\frac{|C_R(y) + N|}{|N|} |C_N(y)| \\
\leq &\underset{S \in \frac{R}{N}}{\sum}\underset{y \in S}{\sum}|C_{\frac{R}{N}}(y + N)| |C_N(y)| \;\;(\text{using Lemma \ref{lemma2}}) \\
= &\underset{S \in \frac{R}{N}}{\sum} |C_{\frac{R}{N}}(S)| \underset{y \in S}{\sum}|C_N(y)| \\
= &\underset{S \in \frac{R}{N}}{\sum} |C_{\frac{R}{N}}(S)| \underset{n \in N}{\sum}|C_R(n) \cap S|.  
\end{align*}
} 
Let $a + N = S$ where $a \in R\setminus N$. If $C_R(n) \cap S = \phi$ then $|C_R(n) \cap S| < |C_N(n)|$. If $C_R(n) \cap S \neq \phi$ then there exists $x_0 \in C_R(n) \cap S$ such that $x_0 = a + n_0$ for some $a \in R\setminus N$ and $n_0 \in N$. Therefore $x_0 + N = a + N = S$ and so $S \cap C_R(n) = (x_0 + N) \cap (x_0 + C_R(n)) = x_0 + (N \cap C_R(n)) = x_0 + C_N(n)$. Hence $|S \cap C_R(n)| \leq |C_N(n)|$. This gives
\begin{align*}
|R|^2 \Pr(R) \leq &\underset{S \in \frac{R}{N}}{\sum} |C_{\frac{R}{N}}(S)| \underset{n \in N}{\sum}|C_N(n)| \\
= & |R/N|^2 \Pr(R/N) |N|^2 \Pr(N) \\
= & |R|^2 \Pr(R/N)\Pr(N).  
\end{align*}
Hence the bound follows.

 Let  $N \cap [R, R] = \{0\}$. Then, by Lemma \ref{lemma2}, we have  
 \[
\frac{C_R(x) + N}{N} = C_{R/N}(x + N) \; \text{for all} \; x \in R.
\]
If $S = a + N$ then it can be seen that $a + n \in C_R(n) \cap S$ for all $n \in N$. Therefore, $C_R(n) \cap S \ne \phi$ for all $n \in N$ and for all $S \in R/N$. Thus all the inequalities above become equalities if $N \cap [R, R] = \{0\}$. This completes the proof.
\end{proof}

In Lemma 2.3 of \cite{BMS}, Buckley et al. showed that
\begin{equation}\label{buckley_eq}
\Pr(R) > \frac{1}{|[R, R]|}.
\end{equation}
The following two results give some improvements of \eqref{buckley_eq}.

\begin{theorem}\label{newlb1}
Let $R$ be a  finite   ring  $R$. Then 
\[
\Pr(R) \geq \frac{1}{|K(S, R)|}\left(1 + \frac{|K(R, R)| - 1}{|R : Z(R)|} \right)
\]
with equality if and only if $|K(R, R)| = |[r, R]|$ for all $r \in R\setminus Z(R)$. In particular, if $R$ is non-commutative then $\Pr(R) > \frac{1}{|K(R, R)|}$.
\end{theorem}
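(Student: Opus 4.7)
The plan is to rewrite $\Pr(R)$ in terms of the sizes $|[r,R]|$ and then to use the inclusion $[r,R]\subseteq K(R,R)$ to get a uniform upper bound on these sizes. The key observation is that for each fixed $r\in R$, the map $\phi_r\colon (R,+)\to (R,+)$ defined by $\phi_r(y)=[r,y]=ry-yr$ is a group homomorphism whose kernel is $C_R(r)$ and whose image is the subgroup $[r,R]$. Therefore
\[
|C_R(r)|=\frac{|R|}{|[r,R]|},
\]
which, together with $\Pr(R)=\frac{1}{|R|^2}\sum_{r\in R}|C_R(r)|$, gives
\[
\Pr(R)=\frac{1}{|R|}\sum_{r\in R}\frac{1}{|[r,R]|}.
\]

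Next I would split this sum according to whether $r\in Z(R)$ or not. For $r\in Z(R)$ the subgroup $[r,R]$ is trivial, contributing $|Z(R)|/|R|$. For $r\in R\setminus Z(R)$, the crucial step is the bound $|[r,R]|\le |K(R,R)|$, which follows from the set-theoretic inclusion $[r,R]=\{[r,y]:y\in R\}\subseteq K(R,R)$; hence $1/|[r,R]|\ge 1/|K(R,R)|$. Combining the two parts yields
\[
\Pr(R)\;\ge\;\frac{|Z(R)|}{|R|}+\frac{|R|-|Z(R)|}{|R|\,|K(R,R)|},
\]
and a direct algebraic rearrangement turns the right-hand side into $\frac{1}{|K(R,R)|}\bigl(1+\frac{|K(R,R)|-1}{|R:Z(R)|}\bigr)$.

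The equality statement reads off immediately from the previous step: the only inequality used is $|[r,R]|\le |K(R,R)|$ for $r\notin Z(R)$, so equality in the theorem is equivalent to $|[r,R]|=|K(R,R)|$ for every such $r$. For the ``in particular'' statement, if $R$ is non-commutative then $K(R,R)$ contains a nonzero element, so $|K(R,R)|\ge 2$, and $|R:Z(R)|$ is a positive integer, so the second term in the bracket is strictly positive and we conclude $\Pr(R)>1/|K(R,R)|$.

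I do not anticipate a genuine obstacle here; the only subtle point is the use of the inclusion $[r,R]\subseteq K(R,R)$ as \emph{sets} (since $K(R,R)$ is not necessarily a subgroup of $(R,+)$), which is enough for the size comparison even though the cardinalities need not satisfy any divisibility relation. The rest is the standard homomorphism trick plus one line of algebra.
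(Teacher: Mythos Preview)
Your proof is correct and follows essentially the same route as the paper: both arguments rewrite $\Pr(R)$ via $|C_R(r)|=|R|/|[r,R]|$, split over $Z(R)$ and its complement, and then apply the set-theoretic inclusion $[r,R]\subseteq K(R,R)$ to bound $|[r,R]|\le |K(R,R)|$. You are slightly more explicit than the paper in justifying $|R:C_R(r)|=|[r,R]|$ through the additive homomorphism $\phi_r$ and in noting that $K(R,R)$ need not be a subgroup, but the logical skeleton is identical.
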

\begin{proof}
By   \eqref{com_prob_eq2}, we have
\[
\Pr(R) =   \frac{|Z(R)|}{|R|}   + \frac{1}{|R|}\underset{r \in R\setminus Z(R)}{\sum}\frac{1}{|R : C_R(r)|}.
\]
Since $|K(R, R)| \geq |[r, R]| = |R : C_R(r)|$ for all $r \in R\setminus Z(R)$, we have
\begin{align*}
\Pr(R) \geq & \frac{|Z(R)|}{|R|}   + \frac{1}{|R|} \underset{r \in R\setminus Z(R)}{\sum}\frac{1}{|K(R, R)|}\\
= & \frac{|Z(R)|}{|R|}   +  \frac{|R| - |Z(R)|}{|R||K(R, R)|}
\end{align*}
from which the result follows.
\end{proof}
\noindent We also have the following lower bound.
\begin{theorem}\label{newlb2}
Let $S$ be a subring of a finite   ring  $R$. Then 
\[
\Pr(R) \geq \frac{1}{|[R, R]|}\left(1 + \frac{|[R, R]| - 1}{|R : Z(R)|} \right)
\]
with equality if and only if $|[R, R]| = |[r, R]|$ for all $r \in R\setminus Z(R)$. In particular, if $R$ is non-commutative then $\Pr(R) > \frac{1}{|[R, R]|}$.
\end{theorem}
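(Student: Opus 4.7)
The plan is to mimic the proof of Theorem \ref{newlb1} with $[R,R]$ playing the role of $K(R,R)$. Starting from \eqref{com_prob_eq2}, I would rewrite
\[
\Pr(R) = \frac{|Z(R)|}{|R|} + \frac{1}{|R|}\sum_{r \in R \setminus Z(R)} \frac{1}{|R : C_R(r)|},
\]
so the task reduces to bounding each $|R : C_R(r)|$ from above by $|[R,R]|$.

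To get that bound, I would first observe that for fixed $r \in R$ the map $\varphi_r : R \to R$ defined by $\varphi_r(y) = [r,y]$ is a homomorphism of additive groups with kernel $C_R(r)$ and image $[r,R]$. The first isomorphism theorem then gives $|[r,R]| = |R : C_R(r)|$. Since $[r,R]$ is a subgroup of $(R,+)$ and is contained in $[R,R]$ (every generator $[r,y]$ lies in $K(R,R) \subseteq [R,R]$), Lagrange's theorem yields $|[r,R]| \mid |[R,R]|$, so in particular $|R : C_R(r)| \leq |[R,R]|$.

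Substituting this inequality into the sum and collecting terms produces
\[
\Pr(R) \geq \frac{|Z(R)|}{|R|} + \frac{|R| - |Z(R)|}{|R|\,|[R,R]|},
\]
and a direct algebraic rearrangement (put everything over the common denominator $|R|\,|[R,R]|$ and factor) rewrites the right-hand side as $\frac{1}{|[R,R]|}\bigl(1 + \frac{|[R,R]|-1}{|R:Z(R)|}\bigr)$, which is the claimed bound. Equality throughout the estimate occurs precisely when $|R : C_R(r)| = |[R,R]|$, i.e.\ $|[r,R]| = |[R,R]|$, for every $r \in R \setminus Z(R)$. For the final "in particular" clause, if $R$ is non-commutative then $[R,R] \neq \{0\}$ forces $|[R,R]| \geq 2$, and $R \setminus Z(R)$ is nonempty, so the correction term $\frac{|[R,R]|-1}{|R:Z(R)|}$ is strictly positive, giving $\Pr(R) > \frac{1}{|[R,R]|}$.

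There is no real obstacle here; the only point requiring care is the identification of $[r,R]$ as a subgroup of $[R,R]$ with order $|R : C_R(r)|$, which is essentially the same observation used implicitly in Theorem \ref{newlb1} but now exploited against the potentially smaller group $[R,R]$ rather than the set $K(R,R)$. Since $|[R,R]|$ can be much smaller than $|K(R,R)|$, this refinement does improve the bound.
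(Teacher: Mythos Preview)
Your proof is correct and follows essentially the same route as the paper: rewrite \eqref{com_prob_eq2} in terms of $|R:C_R(r)|$, bound each such index by $|[R,R]|$, sum, and simplify. You supply more detail than the paper does (the explicit homomorphism $\varphi_r$ and the appeal to Lagrange), and you also spell out the equality case and the strict-inequality clause, which the paper leaves implicit.

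One correction to your closing remark: you have the containment reversed. By definition $K(R,R)\subseteq [R,R]$ (the latter is the subgroup generated by the former), so $|K(R,R)|\le |[R,R]|$, not the other way around. Consequently the bound of Theorem~\ref{newlb2} is \emph{weaker} than that of Theorem~\ref{newlb1}, not a refinement of it; the paper itself notes this just after the two proofs. This does not affect the validity of your argument for the theorem as stated.
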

\begin{proof}
By   \eqref{com_prob_eq2}, we have
\[
\Pr(R) =   \frac{|Z(R)|}{|R|}   + \frac{1}{|R|}\underset{r \in R\setminus Z(R)}{\sum}\frac{1}{|R : C_R(r)|}.
\]
Since $|[R, R]| \geq |R : C_R(r)|$ for all $r \in R\setminus Z(R)$, we have
\begin{align*}
\Pr(R) \geq & \frac{|Z(R)|}{|R|}   + \frac{1}{|R|} \underset{r \in R\setminus Z(R)}{\sum}\frac{1}{|[R, R]|}\\
= & \frac{|Z(R)|}{|R|}   +  \frac{|R| - |Z(R)|}{|R||[R, R]|}
\end{align*}
from which the result follows.
\end{proof}

Let $p$ be the smallest prime dividing $|R|$. If $R$ is non-commutative and $[R, R] \ne R$ then it is easy to see  that
\[
\frac{1}{|[R, R]|}\left(1 + \frac{|[R, R]| - 1}{|R : Z(R)|} \right) \geq \frac{|Z(R)|}{|R|} + \frac{p(|R| - |Z(R)|)}{|R||R|}
\]
with equality  if and only if $|R : [R , R]| = p$. Also,
\[
\frac{1}{|K(R, R)|}\left(1 + \frac{|K(R, R)| - 1}{|R : Z(R)|} \right) \geq \frac{1}{|[R, R]|}\left(1 + \frac{|[R, R]| - 1}{|R : Z(R)|} \right) 
\]
with equality if and only if $K(R, R) = [R, R]$.
Hence, the lower bound obtained in Theorem \ref{newlb1} is better than the lower bounds obtained in   Theorem \ref{theorem001} and Theorem \ref{newlb2}. We conclude the paper noting that Theorem \ref{newlb1} and Theorem \ref{newlb2} are  analogous to \cite[Theorem A]{nY15}  and \cite[Theorem 1]{ND10} respectively.
 



\begin{thebibliography}{3}
\bibitem{BM}
S. M. Buckley and D. Machale, \emph{Contrasting the commuting probabilities of groups and rings}, Preprint.

\bibitem{BMS}
S. M. Buckley, D. Machale, and A. N$\acute{\rm i}$ Sh$\acute{\rm e}$,  \emph{Finite rings with many commuting pairs of elements}, Preprint.




\bibitem{DN11}
A. K. Das,   R. K. Nath and M. R. Pournaki,   {\em A survey on the estimation of commutativity in finite groups}, Southeast Asian Bull. Math. {\bf 37} (2013), 161--180.


\bibitem{pEpT68}
P. Erd$\ddot{\rm o}$s and P. Tur$\acute {\rm a}$n,  {\em On some problems of a statistical group-theory IV}, Acta. Math. Acad. Sci. Hungar. {\bf 19} (1968), 413--435.








\bibitem{dmachale}
D. MacHale,    \emph{Commutativity in finite rings},  Amer. Math. Monthly, {\bf 83}(1976), 30--32.



\bibitem{ND10}
R. K. Nath and A. K. Das,    {\em On a lower bound of commutativity degree},  Rend. Circ. Mat. Palermo, {\bf 59}(2010), 137--142.


\bibitem{nY15}
R. K. Nath and M. K. Yadav, \emph{Some results on relative commutativity degree}, Rend. Circ. Mat. Palermo, {\bf 64}(2) (2015),  229--239. 














\end{thebibliography}
\end{document}